\newcommand{\equals}{\approx}
\newcommand{\Mod}{\mathsf{Mod}}
\newcommand{\Diag}{\mathsf{Diag}}
\newcommand{\Dom}{\mathsf{Dom}}
\newcommand{\Horn}{\forall\mathsf{Horn}}
\newcommand{\Identities}{\mathsf{Identities}}
\newcommand{\Distinct}{\mathsf{Distinct}}
\newcommand{\bbZ}{\mathbb{Z}}
\newcommand{\NCR}{\mathsf {NCR}_1}
\newcommand{\scrF}{{\mathscr F}}
\newcommand{\scrP}{{\mathscr P}}
\newcommand{\scrS}{{\mathscr S}}
\newcommand{\bP}{\mathbf{P}}
\newcommand{\bQ}{\mathbf{Q}}
\newcommand{\bfx}{\mathbf{x}}
\newcommand{\bfp}{\mathbf{p}}
\newcommand{\LT}{{\sf LT}}
\newcommand{\tuple}[1]{{ \bf{#1}}}
\newtheorem{daf}{Definition}
\newtheorem{thr}[daf]{THEOREM}
\newtheorem{lam}[daf]{LEMMA}
\newtheorem{cllr}[daf]{COROLLARY}
\newtheorem{rmk}[daf]{Remark}
\newtheorem{exmp}[daf]{Example}
\newtheorem{clm}{Claim}
\newtheorem{fct}[daf]{Fact}
\newtheorem{prb}{Problem}
 \newtheorem{theorem}{Theorem}[section]
\newtheorem{lemma}[theorem]{Lemma}
\newtheorem{definition}[theorem]{Definition}
\newtheorem{example}[theorem]{Example}
\newtheorem{question}{Question}
\newcounter{dflistctr}
{\end{list}}
\newcounter{txlistctr}
{\end{list}}
\newcounter{thlistctr}
\newenvironment{thlist}{\ 
\begin{list}%
{\alph{thlistctr}}%
{\setlength{\labelwidth}{2ex}%
\setlength{\labelsep}{1ex}%
\setlength{\leftmargin}{6ex}%
\usecounter{thlistctr}}}%
{\end{list}}
\newcounter{thlistctr:p}
\newenvironment{thlist:p}{\ 
\begin{list}%
{\alph{thlistctr:p}}%
{\setlength{\labelwidth}{2ex}%
\setlength{\labelsep}{1ex}%
\setlength{\leftmargin}{6ex}%
\usecounter{thlistctr:p}}}%
{\end{list}}
\newcounter{remlistctr}
{\end{list}}
\newtheorem{prp}[daf]{PROPOSITION}
\title[Boole's Principles]{Boole's Principles of Symbolical Reasoning }
\author{Stanley Burris}
\address{Department of Pure Mathematics\\
University of Waterloo\\
Waterloo, Ontario, N2L 3G1, Canada}
\email{snburris@math.uwaterloo.ca}
\author{H.P. Sankappanavar}
\address{Department of Mathematics\\
SUNY at New Paltz\\
New Paltz, New York 12561, USA}
\email{sankapph@newpaltz.edu}
\date{\today}  
\begin{document}

\begin{abstract}
In modern algebra it is well-known that one cannot, in general, apply ordinary equational reasoning when dealing with partial algebras. However Boole did not know this, and he took the opposite to be a fundamental truth, which he called the Principles of Symbolical Reasoning in his 1854 book {\em Laws of Thought}. Although Boole made no mention of it, his Principles were clearly a generalization of the earlier publications on algebra by the Cambridge mathematician Peacock. After a detailed examination of Boole's presentation of his Principles, we give a correct version that is applicable to his algebra of logic for classes.
\end{abstract}

\maketitle


When Boole started his mathematical research in the late 1830s, he was particularly fascinated by two tools recently introduced into mathematics, the use of {\em operators} (to solve differential equations), and the {\em symbolical method} (which justified algebraic derivations even though the intermediate steps were not interpretable).  Differential operators had been introduced by the French and used by prominent French mathematicians such as Cauchy; but by 1830 the French had pretty much abandoned them---Cauchy felt that not enough was known about them to ensure that their use led to correct results. 

English mathematicians, especially Boole, took up differential operators with great enthusiasm, and in 1844 he won a gold medal from the Royal Society for a paper based on them. When he turned to create his algebra of logic for classes in 1847, it was natural for Boole to build it on a foundation of operators, namely selection operators. For example, the expression ``big shiny '' was viewed as the operator $y$ that selected shiny objects from a class,  followed by the operator $x$ that selected ``big'' objects. The composition of the two operators was written simply as $xy$, and, as with differential operators, $xy$ was viewed as the product of $x$ and $y$. It only remained for Boole to find appropriate definitions for addition and subtraction.  In the 1854 version of his algebra of logic,  Boole replaced the symbols for selection operators with the symbols for classes, relegating the selection operators to a footnote on operations of the mind.

The symbolical method was introduced by Peacock \cite{Peacock-1830} in his 1830 {\em Treatise on Algebra}---this book gave his resolution of concerns about negative and complex numbers. (The {\em Treatise} was expanded into two volumes in 1842/1845.) He split algebra into two parts: (1) {\em Arithmetical Algebra}, which was the algebra of positive numbers (which meant the operation of subtraction was a partial operation), and (2) {\em Symbolical Algebra}, which dispensed with interpretations and dealt solely with equations that could be derived from basic laws.  

The two kinds of algebra were tied together by Peacock's fundamental principle of {\em The Permanence of Equivalent Forms}, which basically said that general truths of the Arithmetical Algebra provided the laws of Symbolical Algebra, and equations derived from the laws in the latter gave true facts about positive numbers whenever they applied to them. 
Arithmetical Algebra had a partial algebra as its interpretation, whereas Symbolical Algebra simply carried out derivations as though the operations were total, without the benefit of an interpretation. Thus $\sqrt{-1}$ was no longer a mysterious number whose nature was to be debated---it was just a symbolical expression devoid of meaning.

Boole was able to create an algebra of logic for classes that had laws remarkably close to those of ordinary algebra;\footnote{
As strange as Boole's use of partial algebras might look to the uninitiated, it seems quite possible that he may have found the simplest way to incorporate the laws he used (stated in $\S$\ref{B text} below) into an algebra of logic for classes.
} 
  but this came at a cost, namely his operations of $+$ and $-$ were partial operations. Since he wanted to freely use algebraic reasoning that was not impeded by the possibility that terms in his derivations might not be interpretable (as classes), he formulated a general version of Peacock's approach to algebra. Boole's {\em Principles of Symbolical Reasoning} lifted Peacock's work out of the confines of numbers to the general setting of partial algebras. Briefly stated, Boole's Principles said that given a collection $\scrP$ of partial algebras and a collection $\Sigma$ of laws that $\scrP$ satisfied (that is, satisfied in all instances for which the terms in the laws were defined),  one could carry out equational reasoning as though one were working with total algebras to obtain valid results for the class $\scrP$. 

Boole's Principles are clearly far too general. Here is  a simple example to show that the equational logic for total algebras (see \cite{BuSa-1981}) need not be valid for partial algebras.
Let $\mathbf{P} =\langle \{0, 1 \},+\rangle$ be the partial algebra given by $0+0=0$ and $1+1=1$; and otherwise $+$ is undefined.   Then the equations $x+y \approx x$ and $x+y \approx y$ hold in $\bP$  whenever $x+y$ is defined. But  $x \approx y$, an equational logic consequence of these two equations, does not hold in $\bP$.

Boole's Principles were presented in Chapter V of his 1854 book {\em Laws of Thought} (this book will be referred to as \LT\  in this paper).
His somewhat rambling introduction to these Principles first started to come into focus with the statement of them on p.~67; his final analysis came on p.~69, where he  said that to justify his Principles one only needed a single example.\footnote
{This was based on his claim that in logic, unlike in the natural sciences, one only needed a single example to justify a general result.}
The only example (besides his algebra of logic for classes) that he mentioned with regard to his Principles was that of the trigonometric identities derived using the imaginary number $\sqrt{-1}$. Evidently Boole was not convinced that this was the desired single example since he also said that such derivations of trigonometric identities could evidently only be justified by appealing to his Principles. He rounded out the defence of his Principles by saying that they should simply be accepted as {\it fundamental facts} about knowledge. 

In $\S$\ref{B text} below, the long-neglected 1854 discussion by Boole of his Principles is carefully examined. $\S$\ref{New Thm} gives a corrected version (Theorem \ref{principles}) of these Principles, based on universal Horn sentences with relativized quantifiers. $\S$\ref{applic} shows that Theorem \ref{principles} does indeed apply to Boole's algebra of logic for classes, thanks to the ground-breaking book of Hailperin \cite{Hailperin-1976} on Boole's work, and to Gorbunov's analysis of quasi-identity logic \cite{Gorbunov-1998}.\footnote
{We are indebted to our colleagues, Professors Kira Adaricheva and Anvar Nurakunov,
for this reference, and to Professor George McNulty for discussions on this topic.
}  

\section{An analysis of Boole's text on his Principles} \label{B text}
Boole claimed, in \LT, that the symbols of common algebra were the natural ones to use for an algebra of logic for classes, and it was a happy coincidence that the laws of common algebra agreed with the laws of his algebra of logic.\footnote{
 In 1999 Priest \cite{Priest-1999} noted that the publication of a significant portion of Boole's Nachlass in \cite{G-G-1997} clarified the fact that ``Boole's driving inspiration was the analogy between arithmetic and logic''. It is easier to sort out the text of \LT\ if one assumes that Boole started with the goal of using common algebra, with its laws and rules of inference, and shoe-horned the logic of classes into this framework (with the help of a restricted use of the idempotent law). 
} 
Boole's algebras $\bP(U) = \langle P(U),+,\cdot,-,0,1 \rangle$, where $P(U)$ is the collection of subsets of the universe $U$, had $0 := \O$, $1:= U$, and multiplication defined as intersection; both addition and subtraction were only partially defined. Indeed, $A+B := A\cup B$ and $A-B := A \setminus B$, when defined, where the domains of addition and subtraction were given by:
\begin{eqnarray*}
\Dom(+)&=& \{(A,B) \in P(U)^2 : A\cap B = 0 \}\\
\Dom(-)&=& \{(A,B) \in P(U)^2 : B\subseteq A \}.
\end{eqnarray*}
Since some of the operations of Boole's algebras were only partially defined, Boole was clearly working with partial algebras.

Boole found laws of the $\bP(U)$,  such as $x+y \approx y+x$, and $x^2 \approx x$,  by looking at the instances where the terms of the laws were defined. After a careful study of Boole's \LT, Hailperin \cite{Hailperin-1976} determined that the collection of laws actually used by Boole was
\begin{itemize}
\item
 $\NCR$, the usual laws for commutative rings with unity, and laws excluding additively nilpotent elements ($nx \approx 0 \Rightarrow x \approx 0$, for $n$ a non-zero integer),
 \item
 $0\napprox 1$, and
 \item
 the special law $x^2\equals x$, which was only applied to class-symbols $A,B,\ldots$.
\end{itemize}

In developing his algebra of logic for classes, Boole reasoned with these laws as if the operations were total, thus using reasoning which, in general, is not valid for partial algebras.  He would start with ground premiss equations whose terms were defined for all values of the class-symbols, and end up with conclusion equation(s) that likewise had terms that were totally defined. But in the intermediate steps in a derivation of the conclusion(s) he could use equations that had terms that were uninterpretable, that is, only partially defined. This was likely the major stumbling block to the understanding of his algebra of logic by mathematicians interested in working with a symbolic logic.

Boole presented his defence of the use of uninterpretables in Chapter V of \LT, which is titled 
\begin{quote} \sf
Of the fundamental principles of symbolical reasoning, and 
of the expansion or development of expressions involving 
logical symbols.
\end{quote}
The first half of the title is the subject of the first six items of Chapter V, where Boole attempted to allay the reader's concerns about the legitimacy of using uninterpretables  in the intermediate steps of a derivation.  These six items are carefully examined, one in each of the the following six subsections,  to see to what extent Boole succeeded---the quotes from Chapter V of \LT\ given below are presented as indented paragraphs in sans serif font.

\subsection{On item 1 of Chap.~V of \LT}
In the first item of Chapter V Boole said that so far he had set up the notation for an algebra of logic, described the fundamental operations that he would use, determined the laws, and had shown how to render primary propositions (his version of categorical propositions) about classes as equations. Next he wanted to develop his algebra of logic by proving theorems and finding algorithms; but first he needed to say something about how he was going to do this. Actually the reader would have to wait till item 4 of Chapter V before learning that Boole would simply be using ordinary equational reasoning on  his partial algebras.

\begin{quote} \sf
1. The previous chapters of this work have been devoted to 
the investigation of the fundamental laws of the operations 
of the mind in reasoning; of their development in the 
laws of the symbols of Logic; and of the principles of expression, 
by which that species of propositions called primary may be represented 
in the language of symbols. These inquiries have been 
in the strictest sense preliminary. They form an indispensable 
introduction to one of the chief objects of this treatise---the construction 
of a system or method of Logic upon the basis of an 
exact summary of the fundamental laws of thought. There are 
certain considerations touching the nature of this end, and the 
means of its attainment, to which I deem it necessary here to 
direct attention. 
\end{quote}

\subsection{On item 2 of Chap.~V of \LT}
In the second item Boole started by saying that in order to have a general method for dealing with the logic of classes he needed general laws and rules of inference. He noted that addition was a partial operation (which presumably could be an obstacle to creating a general method).
\begin{quote}\sf
2. I would remark in the first place that the generality of a 
method in Logic must very much depend upon the generality of 
its elementary processes and laws. one has, for instance, in the 
previous sections of this work investigated, among other things, 
the laws of that logical process of \textit{addition} which is symbolized 
by the sign $+$. Now those laws have been determined from the 
study of instances, in all of which it has been a necessary condition, 
that the classes or things added together in thought should 
be mutually exclusive.  The expression $x + y$ seems indeed uninterpretable, 
unless it be assumed that the things represented by $x$ and the things represented by $y$ are entirely separate; that they embrace no individuals in common.
\end{quote}
Boole never satisfactorily explained in his publications why it was necessary to restrict addition $A+B$ to disjoint classes $A,B$.  The real reason for this restriction on the definition of addition was surely Boole's desire to use ordinary algebra for an algebra of logic (which he was able to do by adding restricted use of the idempotent law). Then, if $A+B$ were defined it would be idempotent, that is, $(A+B)^2 = A+B$. Ordinary algebra, along with the idempotence of $A$ and $B$, would lead to $2AB = 0$, and thus to $AB=0$; this meant $A$ and $B$ were disjoint. Boole actually gave the details of deriving $AB=0$ from $(A+B)^2 = A+B$ in his unpublished manuscripts---see p.~21 of \cite{Boole-1952}.

\begin{quote} \sf
 And conditions 
analogous to this have been involved in those acts of conception 
from the study of which the laws of the other symbolical operations 
have been ascertained. 
\end{quote}
The only other operation that was restricted was subtraction.  The (unstated) reason for this restriction is again determined by looking at the consequences of $A-B$ being idempotent, as Boole also noted in the aforementioned unpublished manuscript. Next Boole stated the potential Achilles heel of his presentation.
\begin{quote} \sf
The question then arises, whether
it is necessary to restrict the application of these symbolical laws 
and processes by the same conditions of interpretability under 
which the knowledge of them was obtained. 
\end{quote}
Boole proceeded to try to make the case that the answer was always ``no restrictions needed'', saying that otherwise his program for an algebra of logic must fail.
\begin{quote} \sf
If such restriction 
is necessary, it is manifest that no such thing as a general 
method in Logic is possible. 
\end{quote}
By a `general method' Boole meant algorithms to deduce desired equational consequences from equational premisses. Actually it was {\em not} manifest that a general method would be impossible under the restriction that all steps only use interpretable terms; it was just that a general method would, in some cases, under such restrictions, be rather unwieldy compared to the method Boole presented. His answer as to whether or not such a restriction was necessary would wait till the end of item 3 below.
\begin{quote} \sf
On the other hand, if such restriction is unnecessary, in what light are we to contemplate processes
which appear to be uninterpretable in that sphere of thought 
which they are designed to aid? These questions do not belong 
to the science of Logic alone. They are equally pertinent to every 
developed form of human reasoning which is based upon the 
employment of a symbolical language. 
\end{quote}

\subsection{On item 3 of Chap.~V of \LT}
 Boole started this item by noting that in everyday reasoning one did not employ uninterpretable steps.
  \begin{quote} \sf
3. I would observe in the second place, that this apparent 
failure of correspondency between process and interpretation does 
not manifest itself in the \emph{ordinary} applications of human reason.
For no operations are there performed of which the meaning
and the application are not seen; and to most minds it does 
not suffice that merely formal reasoning should connect their 
premises and their conclusions; but every step of the connecting 
train, every mediate result which is established in the course of 
demonstration, must be intelligible also. And without doubt, 
this is both an actual condition and an important safeguard, in 
the reasonings and discourses of common life. 
\end{quote}
Next he said that there are perhaps those who would like to apply the same requirement,
of every step being meaningful, to symbolical arguments. 
Eventually he would claim, in item 4, that it was enough that the premisses and conclusion were meaningful in order to obtain a valid, meaningful argument.
\begin{quote} \sf
There are perhaps many who would be disposed to extend 
the same principle to the general use of symbolical language as 
an instrument of reasoning. It might be argued, that as the 
laws or axioms which govern the use of symbols are established 
upon an investigation of those cases only in which interpretation 
is possible, one has no right to extend their application to other 
cases in which interpretation is impossible or doubtful, even 
though (as should be admitted) such application is employed in 
the intermediate steps of demonstration only. 
\end{quote} 
Next he repeated his belief that the symbolical method
offered little to the study of logic unless one could use the laws of the partial algebras freely.
\begin{quote} \sf
Were this objection conclusive, it must be acknowledged that slight advantage
would accrue from the use of a symbolical method in
Logic. Perhaps that advantage would be confined to the mechanical gain of employing short and convenient symbols in the place of more cumbrous ones. 
\end{quote}
The phrase `it must be acknowledged' really meant that Boole believed 
there would be little gain in developing an algebra of logic if one were required to 
make terms interpretable in every step.
At this point the reader would be justified in expecting Boole to offer a profound insight
concerning uninterpretable steps in symbolical reasoning. 
Instead one simply hears the voice of authority.
\begin{quote} \sf
But the objection itself is fallacious. 
Whatever our \textit{\`{a} priori} anticipations might be, it is an 
unquestionable fact that the validity of a conclusion arrived at
by any symbolical process of reasoning, does not depend upon 
our ability to interpret the formal results which have presented 
themselves in the different stages of the investigation. 
\end{quote}
The assertion Boole makes about `an unquestionable fact' is wrong. It is 
simply an erroneous belief that Boole firmly held; his 
 algebra of logic would need to be completely reworked without this `fact'. 
\begin{quote} \sf
There 
exist, in fact, certain general principles relating to the use of 
symbolical methods, which, as pertaining to the particular subject 
of Logic, I shall first state, and I shall then offer some remarks 
upon the nature and upon the grounds of their claim to 
acceptance. 
\end{quote}
The reader will search in vain for a clear statement of any actual grounds for acceptance of the Principles in Boole's text. It seems that the use of the Principles to prove general results about complex numbers and about logic, results which were sound in all the examples Boole had checked, were his only grounds.

\subsection{On item 4 of Chap.~V of \LT}
In this item Boole laid out the requirements of symbolical reasoning that he believed were 
sufficient to guarantee the validity of the results whenever they were interpretable.

\begin{quote} \sf
4. The conditions of valid reasoning, by the aid of symbols, 
are---

1st, That a fixed interpretation be assigned to the symbols 
employed in the expression of the data; and that the laws of the 
combination of those symbols be correctly determined from that 
interpretation. 
\end{quote}
The first condition said that one was to work with a fixed collection of partial algebras, 
and that the laws one worked with 
actually held whenever the terms of the laws were defined in the partial algebras.

\begin{quote} \sf
2nd, That the formal processes of solution or demonstration 
be conducted throughout in obedience to all the laws determined 
as above, without regard to the question of the interpretability 
of the particular results obtained. 
\end{quote}
Clearly the part of the 2nd requirement that said `don't worry about interpretability' is the contentious point in Boole's conditions;  he would have more to say about this in item 5 of Chap.~V.
\begin{quote} \sf
3rd, That the final result be interpretable in form, and that 
it be actually interpreted in accordance with that system of interpretation 
which has been employed in the expression of the 
data. Concerning these principles, the following observations 
may be made. 
\end{quote}
The 3rd condition said that the conclusion needed to be interpretable (so that one
had a meaningful/useful result).

\subsection{On item 5 of Chap.~V of \LT}

Here Boole reflected on the naturalness of the first and third condition, but noted 
that the second condition likely needed ``a few additional words''.
\begin{quote} \sf
5. The necessity of a fixed interpretation of the symbols has 
already been sufficiently dwelt upon (II. 3). The necessity that 
the fixed result should be in such a form as to admit of that interpretation 
being applied, is founded on the obvious principle, 
that the use of symbols is a means towards an end, that end 
being the knowledge of some intelligible fact or truth. And 
that this end may be attained, the final result which expresses 
the symbolical conclusion must be in an interpretable form. It 
is, however, in connexion with the second of the above general 
principles or conditions (V. 4), that the greatest difficulty is 
likely to be felt, and upon this point a few additional words are 
necessary. 
\end{quote}
What followed was a somewhat confused attempt by Boole to justify 
his Principles. He said they rested on another fact that he had
become aware of---that whereas the natural sciences required many
observations to deduce a law of nature, in logic it was different.
He said that a single clear example made the general principle
known.  
\begin{quote} \sf
I would then remark, that the principle in question may be 
considered as resting upon a general law of the mind, the knowledge 
of which is not given to us \textit{\`{a} priori}, i.e. antecedently to 
experience, but is derived, like the knowledge of the other laws 
of the mind, from the clear manifestation of the general principle 
in the particular instance. 
\end{quote}
No evidence for this sweeping claim was provided, just a continuation of 
the claim.  In reality, fundamental principles
of reasoning are based on their acceptance by the community of 
scholars. If Boole had only said that he was proposing that his 
Principles be accepted as fundamental, his writing
 style in these sections would have been more agreeable.
\begin{quote} \sf
A single example of reasoning, in 
which symbols are employed in obedience to laws founded upon 
their interpretation, but without any sustained reference to that 
interpretation, the chain of demonstration conducting us through 
intermediate steps which are not interpretable, to a final result 
which is interpretable, seems not only to establish the validity of 
the particular application, but to make known to us the general 
law manifested therein. No accumulation of instances can properly 
add weight to such evidence. It may furnish us with clearer 
conceptions of that common element of truth upon which the application 
of the principle depends, and so prepare the way for its 
reception. It may, where the immediate force of the evidence is 
not felt, serve as a verification, \textit{\`{a} posteriori}, of the practical validity 
of the principle in question. But this does not affect the position 
affirmed, viz., that the general principle must be seen in the 
particular instance,---seen to be general in application as well as 
true in the special example. 
\end{quote}
Now it seemed that Boole was ready for the coup de gr\^{a}ce, to give that single example to show that, 
according to his `one example is enough' thesis, his Principles were sound.
He stated what is likely the only example he knew of where mathematicians
 worked with
uninterpretables, namely the use of
the uninterpretable $\sqrt{-1}$ when working with numbers. As an interesting
example of results obtained by using $\sqrt{-1}$ he mentioned  trigonometric identities.\footnote
{ This likely refered to results such as DeMoivre's Theorem, that
$$
(\cos \theta + i\sin \theta)^n \ =\ \cos(n\theta) + i \sin(n\theta).
$$
This example would have been quite difficult for Boole to set up for his Principles, presumably starting
with an algebra on the reals. What would the fundamental operations be that allowed
one to discuss $\cos$ and $\sin$? and what would the laws be that led to a proof of 
DeMoivre's Theorem?
} 
He started by saying this was an example of what had been said---unfortunately it does not seem to be an example for `one example is enough', but rather just another application of his Principles.
\begin{quote} \sf
The employment of the uninterpretable 
symbol $\sqrt{-1}$, in the intermediate processes of trigonometry, 
furnishes an illustration of what has been said. I apprehend that 
there is no mode of explaining that application which does not 
covertly assume the very principle in question. 
\end{quote}
Thus Boole ended up not offering the single example to justify his Principles; instead he offered two examples, common algebra and his algebra of logic,
where he believed his Principles applied.\footnote{
 One wonders why Boole bothered bringing up his `single example suffices' assertion---was he hoping that someone else would provide the one example needed? or perhaps that the mathematical community would say that the application of complex numbers to trigonometric identities was valid without reference to his Principles, and hence could be used as the one example?}
Next one sees Boole simply claiming that his Principles deserved to be accepted as fundamental facts.
He could have replaced the totality of items 1--5 in Chap.~V with simply stating his Principles and making the next statement, leaving it to the reader to decide whether or not to accept them.
\begin{quote} \sf
But that principle, 
though not, as I conceive, warranted by formal reasoning 
based upon other grounds, seems to deserve a place among those 
axiomatic truths which constitute, in some sense, the foundation 
of the possibility of general knowledge, and which may properly 
be regarded as expressions of the mind's own laws and constitution. 
\end{quote}
Of course Boole's Principles did not take hold; they seem to have quietly vanished. 
Many mathematicians and logicians, starting with Cayley and Jevons, did not like Boole's uninterpretables. However we are not aware of any objections prior to 1999 to Boole's belief that ``one example is enough'' to justify his Principles. In 1999 Priest \cite{Priest-1999} wrote that ``Even in logic, the truth of a general rule cannot be simply read off from a particular case''. We are not aware of any publication that has addressed the issues with Boole's Principles. Mathematicians, following Jevons and Peirce, soon side-stepped the issues by modifying Boole's algebra of logic, replacing Boole's addition by union and his subtraction by complement, so that there were no uninterpretables.

\subsection{On item 6 of Chap.~V of \LT}
This item of \LT\ offers more puzzling comments by Boole. He said that the Principles would
be used in \LT\ in the following manner: when carrying out an argument
in his algebra for the logic of classes, if one encountered a step that had uninterpretable terms
then one was to stop thinking about classes and switch to thinking about the step 
as applying to the algebra of 0 and 1 described in his Rule of 0 and 1.\footnote
{
This rule is described in detail in \cite{Burris-SEP}.
}
 When one eventually returned to steps where the terms were interpretable in the logic of classes, then one switched back to thinking about classes.

It seems the only reason for using the algebra of 0 and 1 for the steps with uninterpretable terms in the logic of classes was to help the user remember the laws and procedures  that one could use. Otherwise it played no role.
\begin{quote}\sf
6. The following is the mode in which the principle above 
stated will be applied in the present work. It has been seen, 
that any system of propositions may be expressed by equations 
involving symbols $x$, $y$, $z$, which, whenever interpretation is possible, 
are subject to laws identical in form with the laws of a system 
of quantitative symbols, susceptible only of the values $0$ and 
$1$ (II. 15). But as the formal processes of reasoning depend only 
upon the laws of the symbols, and not upon the nature of their 
interpretation, we are permitted to treat the above symbols, 
$x$, $y$, $z$, as if they were quantitative symbols of the kind above 
described. \textit{We may in fact lay aside the logical interpretation of 
the symbols in the given equation; convert them into quantitative symbols,
susceptible only of the values $0$ and $1$; perform upon them as such 
all the requisite processes of solution; and finally restore to them their 
logical interpretation.} And this is the mode of procedure which 
will actually be adopted, though it will be deemed unnecessary 
to restate in every instance the nature of the transformation employed.
\end{quote}
Next Boole reminded us, again, of how important he thought it was to be able to
work as though one had a total algebra, for he believed that otherwise
the quest for the desired algorithms (to derive certain kinds of conclusions)
would be hopeless.
\begin{quote} \sf
The processes to which the symbols $x$, $y$, $z$, regarded 
as quantitative and of the species above described, are subject, are 
not limited by those conditions of thought to which they would, 
if performed upon purely logical symbols, be subject, and a freedom of operation is given to us in the use of them, without 
which, the inquiry after a general method in Logic would be a 
hopeless quest. 
\end{quote}
Boole concluded this section by saying he had a general method to convert
any equational conclusion into an equivalent one that was interpretable, and that would be
presented next.
\begin{quote} \sf
Now the above system of processes would conduct us to no 
intelligible result, unless the final equations resulting therefrom 
were in a form which should render their interpretation, after 
restoring to the symbols their logical significance, possible. 
There exists, however, a general method of reducing equations 
to such a form, and the remainder of this chapter will be devoted 
to its consideration. 
\end{quote}
This general method was based on Boole's Expansion Theorem, an analog of the disjunctive normal form used in modern Boolean algebra (see \cite{Burris-SEP} for details).


\section{Correcting Boole's Principles}  \label{New Thm}

Throughout this section it is assumed that: (i) all partial algebras being discussed belong to a fixed language $\scrF$; (ii) partial algebras are denoted by capital bold letters $\bP$, $\bQ$, etc. (iii)   $\tuple{x}$ is the list $x_1,\ldots,x_m$, and $\tuple{A}$ is the list $A_1,\ldots, A_m$; (iv) $(\forall \tuple{x}\,)$ means $(\forall x_1)\cdots (\forall x_m)$; (v)  $t:=t(\tuple{x}\,)$ denotes a term. 

Given a partial algebra $\bP$, a term $t$ defines a partial operation $t^\bP$ on $\bP$ with domain $\Dom(t^\bP)$. The domain $\Dom_\bP(\omega)$ of an open formula $\omega(\bfx)$ is the intersection of the $\Dom(t^\bP)$, for $t$ a term appearing in $\omega$. The relation $\omega^\bP $ defined by $\omega$ is the collection of $\bfp \in \Dom_\bP(\omega)$ such that $\omega(\bfp)$ is true in $\bP$.  The following notion of subalgebra will be used when working with partial algebras.

 \begin{definition}
Define $\bP \sqsubseteq \bQ$ 
if $P\subseteq Q$ and the operations of $\bP$, where defined, agree with those of $\bQ$, that is
\begin{equation}
f^\bP(\bfp) = p\quad \text{implies}\quad f^\bQ(\bfp) = p,
\end{equation}
for $\bfp \in \Dom(f^\bP)$.
\end{definition}

One easily sees that the following hold.

\begin{lemma} \label{Elev} Suppose $\bP \sqsubseteq \bQ$.  
\begin{thlist}
\item
If $\tuple{p} \in \Dom(t^\bP)$ then $\tuple{p} \in \Dom(t^\bQ)$ and
\begin{equation*}
t^\bQ(\tuple{p}) = t^\bP(\tuple{p}).
\end{equation*}
\item
For $\omega(\bfx)$ an open formula with $\tuple{p} \in \Dom_\bP(\omega)$, one has 
$\tuple{p} \in \Dom_\bQ(\omega)$ and
\begin{equation}\label{Elev A}
\bP \models \omega(\tuple{p}) \quad \text{iff}\quad \bQ \models \omega(\tuple{p}).
\end{equation}
\end{thlist}
\end{lemma}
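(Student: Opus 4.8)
The plan is to prove (a) by structural induction on the term $t$, and then to derive (b) from (a) by a second structural induction on the open formula $\omega$. The hypothesis $\bP \sqsubseteq \bQ$ will be used only once, in the function-symbol step of part (a); everything after that is formal bookkeeping about how $\Dom$ and $\models$ decompose over term- and formula-structure. The reason no genuine difficulty arises is that, once $\tuple{p}$ is known to lie in $\Dom_\bP(t)$ (respectively $\Dom_\bP(\omega)$), every term in sight is \emph{defined}, so the usual obstructions to equational reasoning for partial algebras never get a chance to appear.

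For (a), I would take as base cases a variable $t = x_i$, where $t^\bP$ and $t^\bQ$ are both the $i$-th projection and the claim is immediate, and a constant $t = c$, where the claim is exactly the defining condition of $\sqsubseteq$ applied to the nullary symbol $c$. For the inductive step, write $t = f(t_1,\dots,t_n)$ and suppose $\tuple{p} \in \Dom(t^\bP)$. By the recursive definition of term evaluation in a partial algebra, this means each $\tuple{p} \in \Dom(t_i^\bP)$ and, setting $q_i := t_i^\bP(\tuple{p})$, that $(q_1,\dots,q_n) \in \Dom(f^\bP)$. The induction hypothesis gives $\tuple{p} \in \Dom(t_i^\bQ)$ with $t_i^\bQ(\tuple{p}) = q_i$, and the defining inclusion for $\sqsubseteq$ applied to $f$ at the tuple $(q_1,\dots,q_n)$ gives $(q_1,\dots,q_n) \in \Dom(f^\bQ)$ with $f^\bQ(q_1,\dots,q_n) = f^\bP(q_1,\dots,q_n)$. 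Chaining these yields $\tuple{p} \in \Dom(t^\bQ)$ and $t^\bQ(\tuple{p}) = t^\bP(\tuple{p})$.

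For (b), the domain part is immediate: since $\Dom_\bP(\omega)$ is the intersection of the $\Dom(t^\bP)$ over the finitely many terms $t$ occurring in $\omega$, part (a) sends each $\tuple{p} \in \Dom_\bP(\omega)$ into $\Dom(t^\bQ)$ for every such $t$, hence into $\Dom_\bQ(\omega)$, and moreover $t^\bQ(\tuple{p}) = t^\bP(\tuple{p})$ for each of them. The equivalence (\ref{Elev A}) then follows by induction on $\omega$. For an atomic $\omega$, say $s \equals t$, both sides reduce to comparing $s^\bP(\tuple{p}),\,t^\bP(\tuple{p})$ with $s^\bQ(\tuple{p}),\,t^\bQ(\tuple{p})$, which agree by (a). For the connectives $\neg$, $\wedge$, $\vee$, the terms occurring in a compound formula are exactly those occurring in its immediate subformulas, so the domains decompose as the corresponding intersections; the induction hypothesis then transfers truth across each connective clause by clause, since the connectives are interpreted identically in $\bP$ and $\bQ$.

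I do not expect a real obstacle, as the statement is flagged ``one easily sees''. The one place that requires care is the compound-formula step of (b): before invoking the induction hypothesis on a subformula $\psi$ of $\omega$ one must know that $\tuple{p}$ lies in $\Dom_\bP(\psi)$, and this is precisely what the ``intersection over all terms appearing in $\omega$'' definition of $\Dom_\bP(\omega)$ guarantees, because $\Dom_\bP(\omega) \subseteq \Dom_\bP(\psi)$. Making this containment explicit is the whole content of the argument, and it is exactly the point at which partiality is neutralized.
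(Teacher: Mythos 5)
Your proof is correct. The paper offers no argument for this lemma beyond the remark that ``one easily sees'' it, and your structural induction on terms for part (a), followed by the observation that $\Dom_\bP(\omega)\subseteq\Dom_\bP(\psi)$ for every subformula $\psi$ in part (b), is exactly the routine verification the authors are leaving to the reader.
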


\begin{definition}
Given two  partial algebras $\bP$ and $\bQ$, a mapping $\alpha : P \rightarrow Q$ is an embedding of\, $\bP$ into $\bQ$  iff  $\alpha$ is 1-1 and for each fundamental operation $f$,
if  $\,\tuple{p} \in \Dom(f^\bP)$ then $\alpha(\tuple{p}) \in \Dom(f^\bQ)$ and
$$
f^\bQ(\alpha \tuple{p}) = \alpha f^\bP(\tuple{p}).
$$
\end{definition}

Let $\Horn$ be the set of universal Horn sentences belonging to the given language of  partial algebras, let $\Sigma$ be a subset of $\Horn$, let $\delta(x)$ be a conjunction of  atomic formulas, and
let $\scrP$ be a collection of partial algebras $\bP = \langle P,\scrF \rangle$. 
 The following conventions are adopted, where $\sigma\in\Horn$, say $\sigma := (\forall \tuple{x}\,)\omega(\tuple{x}\,)$
 with $\omega := \omega(\tuple{x}\,)$ an open Horn formula:
\begin{thlist}
\item
Whenever a property $\Pi$ of partial algebras $\bP$ is stated for $\scrP$, this means it applies to all members of $\scrP$.

\item
$\omega$ is $\bP$-total if $\Dom_\bP(\omega) = P^m$, and it is $\scrP$-total if it is total for all $\bP\in \scrP$.  Define $\Dom_\bP(\sigma) := \Dom_\bP(\omega)$, etc.

\item
$\sigma$ holds in a partial algebra $\bP$ if $\omega(\tuple{p}\,)$ is true in $\bP$ for all
$\tuple{p} \in \Dom(\sigma)$. If so, we also say $\bP$ satisfies $\sigma$, abbreviated as $\bP \models \sigma$. $\bP \models \Sigma$ means $\bP \models \sigma$ for $\sigma\in \Sigma$.    $\scrP \models \sigma$ if $\bP \models \Sigma$ for all $\bP \in \scrP$.

\item
A law of $\scrP$ is any $\sigma$ such that $\scrP \models \sigma$.

\item
$\sigma|_\delta$ is the universal Horn sentence obtained by relativizing the quantifiers of $\sigma$ to $\delta(x)$.

\item
$\Mod(\Sigma)$ is the collection of total algebras $\bP$ satisfying $\Sigma$.

\item
$\bP$ embeds in $\Mod(\Sigma)$, written $\bP {\hookrightarrow} \Mod(\Sigma)$, if there is a $\bQ \in \Mod(\Sigma)$ and an embedding $\alpha : \bP \hookrightarrow \bQ$. $\scrP \hookrightarrow \Mod(\Sigma)$ if $\bP \hookrightarrow \Mod(\Sigma)$ for every $\bP\in\scrP$.

\item
$\Sigma \vdash \sigma$ means there is a derivation of $\sigma$ from $\Sigma$ in first-order logic.

\item
$\Diag^+(\bP)$ is the set of formulas $f(\bfp) \equals p$, where $\bfp \in \Dom(f^\bP)$ and $p \in P$ are such that
$f^\bP(\bfp) = p$. (This describes the tables for the fundamental operations of $\bP$.)
\end{thlist}
With this notation Boole's Principles can be stated as:\footnote{
Boole's formulation of his Principles was incomplete for the application he had in mind, namely to his algebra of logic for classes. He did not discuss the possibility of restricting some of the laws so that they applied only to the class symbols, a restriction he quietly imposed on the idempotent law $x^2\equals x$. This restriction is handled by our $\delta(x)$.
} 
\begin{quote}
{\em
 If $\scrP \models \Sigma \cup \{(\forall x) \delta(x)\}$ and $\delta(x)$ is $\scrP$-total then
\begin{equation} \label{Boole's Principles}
(\forall \sigma \in \Horn) \big(\Sigma \vdash \sigma|_\delta\ \Rightarrow\ \scrP \models \sigma ).
\end{equation}
}
\end{quote}
Example \ref{CX} shows that this is false in general. The correct version is given in the next theorem, and $\S$\ref{applic} shows how it applies to Boole's algebra of logic for classes.

\begin{theorem} \label{principles}
Suppose  $\Sigma \cup \{(\forall x) \delta(x)\}$ is a set of laws for $\scrP$ with $\delta(x)$ a $\scrP$-total formula.   Then
\begin{equation} \label{goal}
(\forall \sigma \in \Horn) \big(\Sigma \vdash \sigma|_\delta\ \Rightarrow\ \scrP \models \sigma )
\end{equation}
iff
\begin{equation} \label{cond}
\scrP \,{\hookrightarrow}\, \Mod(\Sigma).
\end{equation}
\end{theorem}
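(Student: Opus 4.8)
The plan is to prove the two implications \eqref{cond}$\Rightarrow$\eqref{goal} and \eqref{goal}$\Rightarrow$\eqref{cond} separately, the first by a direct soundness argument and the second by the method of diagrams together with compactness.

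For \eqref{cond}$\Rightarrow$\eqref{goal}, suppose $\scrP \hookrightarrow \Mod(\Sigma)$ and let $\sigma = (\forall\tuple{x})\omega(\tuple{x})$ be a universal Horn sentence with $\Sigma \vdash \sigma|_\delta$. Fix $\bP \in \scrP$, choose a total $\bQ \in \Mod(\Sigma)$ and an embedding $\alpha : \bP \hookrightarrow \bQ$; by soundness of first-order logic $\bQ \models \sigma|_\delta$. First I would record that an embedding, like the inclusion in Lemma \ref{Elev}, transports terms: if $\tuple{p} \in \Dom(t^\bP)$ then $t^\bQ(\alpha\tuple{p}) = \alpha\,t^\bP(\tuple{p})$, by induction on $t$ from the definition of embedding. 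Since $\alpha$ is $1$--$1$, each atomic formula occurring in $\omega$ is therefore both preserved and reflected at any $\tuple{p} \in \Dom_\bP(\omega)$. Now take such a $\tuple{p}$ satisfying the hypotheses of the Horn formula $\omega$ in $\bP$. Because $\delta$ is $\scrP$-total and $\scrP \models (\forall x)\delta(x)$, every element of $P$ satisfies $\delta$ in $\bP$, hence $\bQ \models \delta(\alpha p_i)$ for each $i$; likewise the preserved hypotheses hold at $\alpha\tuple{p}$ in $\bQ$. Since $\bQ$ is total and $\bQ \models \sigma|_\delta$, the head of $\omega$ holds at $\alpha\tuple{p}$, and reflecting it back gives the head in $\bP$. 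Thus $\bP \models \omega(\tuple{p})$, so $\bP \models \sigma$, and as $\bP$ was arbitrary, $\scrP \models \sigma$. The negative-clause case (head false) is the same argument, read as a proof that the hypotheses cannot all hold.

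For the converse \eqref{goal}$\Rightarrow$\eqref{cond}, fix $\bP \in \scrP$ and introduce a new constant $\underline p$ for each $p \in P$. I would form the first-order theory $T = \Sigma \cup \Diag^+(\bP) \cup \{\underline p \napprox \underline q : p \neq q\}$ and observe that a model of $T$ is exactly a total $\bQ \models \Sigma$ together with an embedding $\bP \hookrightarrow \bQ$: the positive diagram forces the operation tables, the inequations force injectivity, and the totality of $\bQ$ makes the domain conditions vacuous. So it suffices to prove $T$ consistent. Suppose not; by compactness some finite $\{D_1,\dots,D_n\} \subseteq \Diag^+(\bP)$ and finitely many inequations give $\Sigma \cup \{D_1,\dots,D_n\} \vdash \bigvee_{j=1}^r \underline{p_j} \equals \underline{q_j}$. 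If $r=0$ this already reads as $\Sigma \cup \{D_1,\dots,D_n\} \vdash \bot$, yielding the negative Horn sentence $(\forall\tuple x)\neg(D_1^* \wedge\cdots\wedge D_n^*)$; if $r>0$ the key step is to collapse the disjunction to a single disjunct. Since $\Sigma \cup \{D_1,\dots,D_n\}$ is a set of Horn sentences and the $\underline{p_j}\equals\underline{q_j}$ are ground atoms, the disjunction property for Horn theories — proved by forming a direct product of countermodels and using that Horn sentences survive direct products \cite{BuSa-1981} and that ground equations are evaluated coordinatewise — gives $\Sigma \cup \{D_1,\dots,D_n\} \vdash \underline{p_{j_0}}\equals\underline{q_{j_0}}$ for some $j_0$. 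Replacing the constants $\underline p$ (which do not occur in $\Sigma$) by variables $x_p$ produces a genuine universal Horn sentence $\sigma := (\forall\tuple{x})\big(D_1^* \wedge\cdots\wedge D_n^* \Rightarrow x_{p_{j_0}}\equals x_{q_{j_0}}\big)$ with $\Sigma \vdash \sigma$, hence $\Sigma \vdash \sigma|_\delta$. By \eqref{goal}, $\scrP \models \sigma$, so $\bP \models \sigma$. Evaluating $\sigma$ at the assignment $x_p \mapsto p$ is legitimate because each $D_i$ records a \emph{defined} operation of $\bP$, so the assignment lies in the domain of the matrix and the hypotheses $D_i^*$ all hold; this forces $p_{j_0} = q_{j_0}$, contradicting $p_{j_0}\neq q_{j_0}$ (in the $r=0$ case one contradicts the mere truth of the $D_i^*$). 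Hence $T$ is consistent and $\bP \hookrightarrow \Mod(\Sigma)$.

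I expect the disjunction-elimination to be the main obstacle. The naive diagram argument produces the non-Horn consequence $\bigvee_j \underline{p_j}\equals\underline{q_j}$, whereas \eqref{goal} speaks only about honest Horn sentences; the product-of-countermodels argument is precisely what licenses passing to a single inequation. Verifying that Horn sentences are preserved under direct products, and that closed terms are interpreted coordinatewise there, is the one place where real care is needed — everything else is bookkeeping with Lemma \ref{Elev}, compactness, and the theorem on constants.
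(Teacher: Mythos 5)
Your proof is correct and follows essentially the same route as the paper's: embedding into a total model of $\Sigma$ and pulling the universal sentence back for one direction, and positive diagram plus $\Distinct(P)$, compactness, the Horn disjunction property, and generalization on constants for the other. The only differences are cosmetic --- you argue by contradiction rather than contrapositive, you obtain $\Sigma \vdash \sigma|_\delta$ by noting relativization weakens $\sigma$ where the paper instead inserts $\delta(p)$ into $\scrS$, and you are somewhat more explicit than the paper about justifying the collapse of $\bigvee_j \underline{p_j}\equals\underline{q_j}$ to a single disjunct and about the degenerate case $r=0$.
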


\begin{proof}
($\Leftarrow$) Suppose that \eqref{cond} holds and $\Sigma \vdash \sigma|_\delta$. Given $\bP \in \scrP$ let $\bP {\hookrightarrow} \bQ \in \Mod( \Sigma)$. 
From $\bQ \in\Mod( \Sigma)$ it follows that $\bQ \models \sigma|_\delta$. Since  $\sigma|_\delta$ is a universal sentence, it follows that $\bP \models \sigma|_\delta$. Now $\delta$ is $\bP$-total and $\bP \models (\forall x) \delta(x)$, thus $\bP \models \sigma$.

($\Rightarrow$) For the converse suppose that \eqref{cond} fails. Then, for some $\bP\in \scrP$,
\begin{equation}
\Sigma \cup \Diag^+(\bP) \cup \Distinct(P) 
\end{equation}
is inconsistent, where $\Distinct(P)$ is the set of negated atomic formulas $p \napprox q$ for pairs $p,q$ of distinct elements of $P$.  By compactness there is a finite $P_0 := \{ p_1,\ldots,p_m\} \subseteq P$ and a finite subset $\scrS$ of $\Diag^+(\bP)$, with the only $p\in P$ mentioned in $\scrS$ being members of $P_0$, such that
\begin{equation}
\Sigma \cup \scrS \cup \Distinct(P_0) 
\end{equation}
is inconsistent.  
Since this is a set of Horn sentences, for some $i<j$, say $i=1$ and $j=2$,
we have
\begin{equation}
\Sigma \cup \scrS \cup \{ p_1 \napprox p_2 \} 
\end{equation}
is inconsistent. This is equivalent to
\begin{equation} \label{step0}
\Sigma \vdash \big(\bigwedge\scrS \Big)\  \rightarrow  \ p_1 \equals p_2.
\end{equation}
 We can assume, w.l.o.g, that $\delta(p) \in \scrS$ for $p \in P_0$. Thus we can write
 \eqref{step0} in the form
 \begin{equation} \label{step}
\Sigma \vdash \Big(\bigwedge\{ \delta(p) : p \in P_0\} \wedge \Omega(\bfp\,) \Big)\  \rightarrow \  p_1 \equals p_2,
\end{equation}
where $\Omega(\bfp\,)$ is a conjunction of atomic sentences from $\scrS$. 
This implies
 \begin{equation} \label{step2}
\Sigma \vdash (\forall \bfx\,) \Big[\Big(\bigwedge_{1 \le i \le m} \delta(x_i) \wedge \Omega(\bfx\,) \Big)\   \rightarrow\   x_1 \equals x_2 \Big].
\end{equation}
Letting
$$
\sigma := (\forall \bfx\,) \Big[ \Omega(\bfx\,)  \   \rightarrow \   x_1 \equals x_2 \Big]
$$
item \eqref{step2} becomes
 \begin{equation} \label{step3}
\Sigma \vdash \sigma|_\delta .
\end{equation}
But $\bP \not{\models}\, \sigma$ since $\bfp \in \Dom_\bP(\sigma)$, but it
fails to make the matrix of $\sigma$, namely
$$
 \Omega(\bfx\,)  \   \rightarrow \   x_1 \equals x_2,
$$
true. 
Thus the assumption that $\eqref{cond}$ fails leads to the fact that $\eqref{goal}$ fails.

\end{proof}

One can ask if there is a parallel result when working with identities.

\begin{question} \label{problem}
Suppose  $\scrP \models\Sigma$  where $\Sigma$ is a set of identities.   Does one have
\begin{equation} \label{goal2}
(\forall \sigma \in \Identities) \big(\Sigma \vdash \sigma\ \Rightarrow\ \scrP \models \sigma )
\end{equation}
iff
\begin{equation} \label{cond2}
\scrP \,{\hookrightarrow}\, \Mod(\Sigma)\ ?
\end{equation}
\end{question}


\section{Application to Boole's algebra of logic for classes} \label{applic}

Recall that Boole defined a partial algebra $\bP_U$ on the collection of subclasses of a universe $U\neq \O$ as follows:\footnote{
Note: To be in agreement with Boole's vocabulary  the word `class' is used here where the modern usage would usually be `set'.
 }
 
\begin{eqnarray*}
A\cdot B &:=& 
\begin{cases} 
A \cap B&\text{ for all  } A, B 
\end{cases}\\
A+B &:=& 
\begin{cases} 
A \cup B&\text{ if } A \cap B = \O\\ \text{undefined} & \text{otherwise}
\end{cases}\\
A-B &:=& 
\begin{cases} 
A \setminus B&\text{ if } B \subseteq A\\ \text{undefined} & \text{otherwise}
\end{cases}\\
1 &:=& U\\
0 &:=& \O
\end{eqnarray*}
Let $\scrP$ be the collection of $\bP_U$.

As mentioned earlier, Boole used, for the laws of $\scrP$, 
the usual equational  laws $CR_1$ for commutative rings with unity along with the law $0 \napprox 1$ and the quasi-identity laws that express `no additively nilpotent elements'. This set of universal Horn sentences will be denoted by $\Sigma$. Boole also used the idempotent law $x^2  \equals  x$, but in a limited sense---it only applied to class-symbols. Define $\delta(x)$ to be $x^2 \equals x$.

Note that $\scrP {\hookrightarrow} \Mod(\Sigma)$, namely one can easily embed $\bP_U$ into $\bbZ^U$, the ring of integers raised to the power $U$, by mapping $A\subseteq U$ to its characteristic function $\chi_A$.
In view of Theorem \ref{principles} it is not surprising that Hailperin used this fact to show that a large portion of Boole's algebra of logic for classes could be put on a firm basis---one has, from Theorem \ref{principles},
\begin{equation} \label{BP}
\Sigma \vdash \sigma|_\delta\quad \Rightarrow\quad \scrP \models \sigma.
\end{equation}

Boole was interested in applying usual equational reasoning, based on the laws $CR_1$ of numbers, to justify certain ground equational arguments:
\begin{equation}\label{arg}
\varepsilon_1(\tuple{A}\,), \ldots, \varepsilon_k(\tuple{A}\,)\quad \tfore\quad \varepsilon(\tuple{A}\,) .
\end{equation}
 He used the no-nilpotent-elements laws as inference rules (e.g., from $2t(\tuple{A}\,)\equals 0$ one has $t(\tuple{A}\,)\equals 0$).
The idempotent law only applied to the symbols $A_i$. 
Let
$$
\sigma := (\forall  \bfx)\Big[\Big(\bigwedge_{1\le i \le k} \varepsilon_i(\tuple{x} \,) \Big) \ \rightarrow\ \varepsilon(\tuple{x}\,)\Big].
$$
 To justify \eqref{arg} using Boole's ground equational reasoning is equivalent to showing $\Sigma \vdash \sigma|_\delta$; see Corollary 2.2.4 in Gorbunov \cite{Gorbunov-1998}.

So suppose \eqref{arg} follows from Boole's ground equational reasoning.
Then $\Sigma \vdash \sigma|_\delta$, so 
by $\eqref{BP}, \scrP \models \sigma$. This says \eqref{arg} is indeed valid in $\scrP$. 

\begin{example} \label{CX}
It is surprising that Boole formulated his Principles so generally since a counter-example was readily available from what he knew. With $\Sigma$ as above, let $\Sigma_1$ be $ \Sigma \cup (\forall x)(x^2\equals x)$. Then, with $\scrP$ being Boole's class of partial algebras, one has $\scrP \models \Sigma_1$.
However his Principles fail to hold since the following is false:
\begin{equation} \label{CXeq}
(\forall \sigma \in \Horn) \big(\Sigma_1 \vdash \sigma \ \Rightarrow\ \scrP \models \sigma \big).
\end{equation}
To see that \eqref {CXeq} is false, note that
$\Sigma_1\vdash (\forall x)\big( (2x)^2 \equals 2x\big)$, leading to  
$\Sigma_1 \vdash (\forall x) (2x \equals 0)$. Then $\Sigma_1 \vdash \sigma := (\forall x)( x \equals 0)$.  But clearly this $\sigma$ is not satisfied by $\scrP$.
\end{example}


\end{document}